\documentclass{amsart}
\usepackage{graphicx}
\usepackage{amssymb}
\usepackage{graphicx}
\usepackage{amsfonts}
\usepackage{amsmath}
\usepackage{amsthm}
\usepackage{esint}
\usepackage{mathabx}
\usepackage{mathtools}
\usepackage{nccmath}
\usepackage[colorlinks]{hyperref}

\newtheorem{theorem}{Theorem}[section]
\newtheorem{lemma}{Lemma}[section]
\newtheorem{rmk}{Remark}[section]

\newtheorem{definition}{Definition}[section]
\newcommand{\bigO}{\mathcal{O}}
\newcommand{\e}{\epsilon}
\newcommand{\B}{\mathcal{B}}
\newcommand{\F}{\mathcal{F}}
\newcommand{\T}{\mathcal{T}}

\begin{document}

\title[Global proof of the parabolic Harnack inequality]{A global proof of the weak Harnack inequality for parabolic equations in non-divergence form}

\author{Aranya Sen}
\address{Department of Mathematics, UC Irvine}
\email{\tt aranyas1@uci.edu}


\begin{abstract}
The weak Harnack inequality is a fundamental result in the theory of fully nonlinear parabolic partial differential equations. It has several important consequences such as the Hölder continuity of derivatives of solutions to certain fully nonlinear parabolic equations. Classical proofs of the weak Harnack inequality rely on decay of measure estimates, which require delicate localization and covering arguments. We give a new proof based on a detailed study of particular envelopes of solutions to parabolic partial differential equations. This allows us to completely bypass the need of covering arguments. We also prove a $W^{2,\epsilon}$ estimate with an improved dependence of $\epsilon$ on the ellipticity ratio compared to previous results.

\end{abstract}
\maketitle



\section{Introduction}\label{intro}

In this paper we study viscosity supersolutions of
\begin{equation}\label{main_equation}
    u_t-\mathcal{M}^{-}_{\lambda,\Lambda}(D^2u)+C |\nabla u| + C|u|  \geq 0 
\end{equation}
on $Q_2 = B_2(0)\times (-4,0]$ for any $C>0$. $\mathcal{M}^{-}_{\lambda,\Lambda}$ and $\mathcal{M}^{+}_{\lambda,\Lambda}$ are Pucci extremal operators defined in Section \ref{Preliminaries}. These equations appear in the study of solutions of parabolic partial differential equations of the form
\begin{equation}\label{main_pde}
    u_t - F(D^2u,\nabla u, u,x,t) = 0.
\end{equation}
Here $F: Sym(n) \times \mathbb{R}^n\times \mathbb{R} \times Q_2\to \mathbb{R}$ is a function with the following properties,
\begin{enumerate}
    \item (uniform ellipticity) For constants $0<\lambda \leq \Lambda$,
            \begin{equation}
                \lambda \|N\| \leq F(M+N,p,z,x,t) - F(M,p,z,x,t) \leq \Lambda \|N\|
            \end{equation}
            for all $M,N\in Sym(n)$, $N \geq 0$, $(p,z) \in \mathbb{R}^n\times \mathbb{R}$ and $(x,t)\in Q_2$.
    \item ($0$ is a solution) The constant function $u=0$ is a solution to the equation \eqref{main_pde}.
    \item (F is Lipschitz) For $(M,p,z,x,t)\in Sym(n) \times \mathbb{R}^n\times \mathbb{R} \times Q_2$, $F$ has the following Lipschitz bound,
                 $$\|\nabla_{(p,z)} F\|_{L^{\infty}(Sym(n)\times\mathbb{R}^n\times \mathbb{R}\times Q_2)} \leq C$$  
        for some constant $C>0$. 
\end{enumerate}
 In particular the statement above includes equations such as, 
$$u_t - \text{tr}(A(x,t)D^2_xu(x,t))-\textbf{b}(x,t)\cdot \nabla_x u(x,t) -c(x,t)u(x,t) = 0$$
where the matrix $A(x,t)$ has eigenvalues bounded between $\lambda$ and $\Lambda$ and the functions $\textbf{b},c$ are bounded.\\

  A major breakthrough in the study of equation \eqref{main_pde} was achieved when Krylov and Safonov established a weak Harnack inequality for solutions to \eqref{main_equation} in \cite{krylov_certain_1981}. Later Tso in \cite{Tso} improved the ABP estimate used in the proof of the parabolic Harnack inequality. A comprehensive study of viscosity solutions to fully nonlinear parabolic equations was carried out in a series of landmark papers by L. Wang in \cite{LWang1}, \cite{LWang2} and \cite{LWang3}. This was done in an effort to establish parabolic analogues of results for elliptic partial differential equations developed in \cite{Caffarelli} and \cite{CKNS}. Therefore for many results concerning elliptic partial differential equations, it is natural and interesting to ask whether analogous statement holds in the parabolic setting. See for example Savin's perturbed solutions result in \cite{savin_small_2007} and its parabolic version in Y. Wang's \cite{wang_small_2012}; the partial regularity result of Silvestre, Armstrong and Smart in \cite{SSA} and its parabolic analogue by Daniel in \cite{JPD}; counterexamples to regularity of uniformly elliptic partial differential equations constructed by Nadirashvili and Vlăduţ in \cite{NADIRASHVILI2013769} and their (almost) parabolic counterparts by Silvestre in \cite{Sil_counterexample}. 
  
  In fact a $W^{2,\e}$ result in \cite{SSA} and \cite{JPD} is closely related to the dimension of the singular sets of solutions discussed in those papers. Therefore a question was raised in \cite{SSA} to find better asymptotic estimates for this $\e$. In the elliptic setting it was improved by Le in \cite{Le} and then later by Mooney in \cite{mooney_proof_2019}. As an application of our techniques in this paper, we improve the dependence of this exponent on the ellipticity ratio for the parabolic case.

  A common theme in the proof of a weak Harnack inequality, as discussed in \cite{mooney_proof_2019} for the elliptic case, is: $(A)$ Establish an ABP estimate using the area formula to gain information in measure of certain contact points, and $(B)$ Perform a delicate (Carlderon-Zygmund or Vitali) covering argument. Our method adapted from \cite{mooney_proof_2019} is global in nature and allows us to avoid a covering argument. Moreover a simple observation while trying to apply the area formula leads to another improvement.\\

We now describe our strategy of proving Theorem \ref{he}. See the discussion following it for some important consequences of weak Harnack inequality. Due to reasons mentioned in \cite[Section 6]{wang_small_2012}, it is enough to study equation \eqref{main_equation} when $C=0$. When $C \neq 0$, the statements of our results remain mostly unchanged except for the additional dependence of $\e$ in our theorems on $C$. 

By a standard ABP estimate (similar to our Lemma \ref{pgn}) the lower envelope of solutions to \eqref{main_equation} by spacetime paraboloids of openings $8^k$ (denoted by $E_{8^k}$) contact the solution in a universal fraction of $\B_1$ (See Section \ref{Preliminaries} for definitions). One then expects steeper spacetime paraboloids of opening $8^{k+1}$ to have additional contact points away from the agreement set of the solution with $E_{8^k}$. This is achieved by considering new spacetime paraboloids of opening $8^{k+1}$ tangent from below to $E_{8^k}$ at points away from its contact set, and then allowing them to flow further in time. We prove this in Lemma \ref{tching} which hinges on the fact that supersolutions to \eqref{main_equation} cannot have upward corners. Then in Lemma \ref{pgn} this process is carried out to gain new information in measure which is global in nature. For our arguments to work, we also need to ensure that all the contact points happen in the interior of the domain. This is done by using Lemma \ref{smp} and approaching the boundary from the interior in a quantitative fashion.

 The paper is organized as follows: We gather some notation and give exact statements of our results in Section \ref{Preliminaries}. Then in Section \ref{support} we establish necessary lemmata for our proof. We conclude in Section \ref{proof} by proving Theorem \ref{he}.

\section{Preliminaries}\label{Preliminaries}
In this section we fix our notations. All distances and measures are taken to be the standard $(n+1)$ dimensional Euclidean distance and Lebesgue measure. $Sym(n)$ denotes the set of all $n\times n$ real symmetric matrices.\\
 For $N \in Sym(n)$ we define the Pucci extremal operators as,
\begin{equation}\label{pucci}
    \begin{split}
        \mathcal{M}^{-}_{\lambda,\Lambda}(N) := \inf\{\text{tr}(AN)\;|\; \forall A \in &Sym(n)\text{ such that } \lambda I \leq A \leq \Lambda I \}\\
        =\lambda \left(\sum \text{positive eigenvalues of }N\right)&+\Lambda\left(\sum \text{negative eigenvalues of }N\right),\\                
    \end{split}
\end{equation} 
and
\begin{equation}
    \mathcal{M}^{+}_{\lambda,\Lambda}(N) : = -\mathcal{M}^{-}_{\lambda,\Lambda}(-N).
\end{equation}

We define the backward and forward parabolic neighbourhoods respectively as,
\begin{equation}
    \begin{split}
        &Q_{r}^B((x_0,t_0)) = \{(x,s)\in \mathbb{R}^{n}\times \mathbb{R}\;:\; |x-x_0| < r, t_0-r^2 < t\leq t_0 \},\\
        &Q_{r}^F((x_0,t_0)) = \{(x,s)\in \mathbb{R}^{n}\times \mathbb{R}\;:\; |x-x_0| < r, t_0 < t\leq t_0+r^2 \}.
    \end{split}
\end{equation}
  An arbitrary parabolic neighbourhood will be denoted as $Q$. We also recall the standard notation for a parabolic cylinder, 
  $$Q_r = Q_r^B((0,0)).$$

We further define the backward and forward spacetime parabolic balls respectively as,
  \begin{equation}
    \begin{split}
        &\B_{(x_0,t_0)} = \{(y,t)\in \mathbb{R}^{n}\times \mathbb{R}\;:\; t-t_0 < -|x-x_0|^2\},\\
        &\F_{(x_0,t_0)} = \{(y,t)\in \mathbb{R}^{n}\times \mathbb{R}\;:\; t-t_0 >|x-x_0|^2\}.
    \end{split}
  \end{equation}
  The unit backward parabolic ball is denoted as $\B_1 = \B_{(0,0)}\cap Q_1$.\\

  We now define the shapes used in deriving our estimates. For $\delta > 0, (y,s)\in \mathbb{R}^n\times\mathbb{R}$ and $\sigma\in \mathbb{R}$ consider
 \begin{equation}\label{stp}
    P^{(y,s)}_{\sigma;\delta}(x,t) = \sigma \left(t-s-\frac{|x-y|^2}{2}\right)\;\text{ for } s\leq t \leq s+\delta,\; x\in\mathbb{R}^n.
 \end{equation}  
 We will say the above function has opening and slope $|\sigma|$, vertex $(y,s)$ and flows for time $\delta$. If $\sigma>0$, we call it a concave (spacetime-)paraboloid and if $\sigma<0$ we call it a convex (spacetime-)paraboloid. An arbitrary such shape which flows for infinite time (i.e. $\delta$ is taken to be infinite) will be called $P_\sigma$, only specifying the opening.
 
 Now consider the envelope defined by concave paraboloids with a fixed opening $\sigma$ to a nonnegative continuous function $u\in C(\bar{Q})$,
 \begin{equation}
    E_\sigma(u) = \sup{\{P^{(y,s)}_{\sigma;\delta}\;\big|\;P^{(y,s)}_{\sigma;\delta} \leq u \text{ in }\bar{Q},\; (y,s)\in \mathbb{R}^{n}\times \mathbb{R}\text{ and }\delta\in \mathbb{R}_{> 0}\}}.
 \end{equation}
When there is no ambiguity for the function $u$, we will simply write $E_\sigma$. We also define the set of contact points
\begin{equation}
    A_\sigma(u) = \{(x,t)\in \mathbb{R}^{n}\times \mathbb{R}\;:\; E_\sigma(u)(x,t)=u(x,t)\}.
\end{equation}
Note that $E_\sigma$ and $A_\sigma$ have the following scaling,
\begin{equation}
    E_{\sigma}(u) = \sigma E_{1}\left(\frac{u}{\sigma}\right)\;\text{   and  }\;A_{\sigma}(u) = A_{1}\left(\frac{u}{\sigma}\right).
\end{equation}

 We now recall the notion of touching from below in the parabolic setting. Note that for the time variable, any continuity (or differentiability) is understood in terms of left continuity (or differentiability).
 \begin{definition}
 Let $\varphi$ be a smooth function and $v\in C(\overline{Q_1})$. We say $\varphi$ touches $v$ from below at $(x,t)\in Q_1$ if for some $\e>0$,
 \begin{equation}
    \varphi(\xi,\tau) \leq v(\xi,\tau),\;\;\forall (\xi,\tau)\in Q_\e^B((x,t)),\;\;\text{   and  }\;\; \varphi(x,t)= v(x,t).
 \end{equation}
 \end{definition}

 \begin{definition}
    A function $v\in C(\overline{Q_1})$ is called a viscosity supersolution if it solves $v_t-\mathcal{M}^-_{\Lambda,\lambda}(D^2_xv)\geq 0$ in the viscosity sense, i.e., for all smooth functions $\varphi$ that touches $v$ from below at some $(x,t)\in Q_1$, $\varphi_t(x,t)-\mathcal{M}^+_{\Lambda,\lambda}(D^2_x\varphi)(x,t)\geq 0$.
 \end{definition}
 
 We also recall semi-convexity in the setting of parabolic partial differential equations,
 \begin{definition}
  A function $v\in C(\overline{Q})$, is called uniformly semi-convex if it can be touched from below at every point in $Q$ by concave paraboloids of fixed opening. 
 \end{definition}
 The Lemma below gives us regularity properties of semi-convex functions,
  \begin{lemma}[Theorem 2.8, \cite{wang_small_2012}]\label{wdiff}
    If a function v is uniformly semi-convex on a set $Q_1$, then there exists a measure zero set $\mathcal{N}$ such that the following statement holds:
    For each $(x,s)\in Q_1\setminus \mathcal{N}$, $v$ has the following expansion,
    $$v(\xi,t) = p_{(x,s)}(\xi,t)+o(|\xi-x|^2+|t-s|),\;\text{ as}\;|\xi-x|\to 0,|t-s|\to 0.$$
    Here $p_{(x,s)}$ is the following function,
    $$p_{(x,s)}(\xi,t) = a+b\cdot(\xi-x)+\beta (t-s)+\frac{1}{2}(\xi-x)^tM(\xi-x),\;a,\beta\in \mathbb{R},b\in\mathbb{R}^n,M\in Sym(n).$$ 
 \end{lemma}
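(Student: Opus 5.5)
The plan is to reduce the statement to two classical one-variable-type facts, namely Alexandrov's theorem in $x$ and Lebesgue's theorem on monotone functions in $t$, and then to combine them with a quantitative estimate that yields the joint expansion. Suppose $v$ is touched from below at every point of $Q_1$ by concave paraboloids of opening $K$. Touching at $(x,t)$ by $P^{(y,s)}_{K}$ on $Q^B_\e((x,t))$ gives two one-sided bounds. Restricting to the time slice $\tau=t$ shows that $v(\cdot,t)+\tfrac K2|\cdot|^2$ is touched from below by an affine function at every point, so it is locally convex in $x$. Restricting to $\xi=x$ gives $v(x,\tau)\ge v(x,t)+K(\tau-t)$ for $\tau<t$ close to $t$, so $t\mapsto v(x,t)-Kt$ is nonincreasing. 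Set
$$w(x,t)=v(x,t)+\tfrac K2|x|^2-Kt .$$
Then $w$ is convex in $x$ for each $t$ and nonincreasing in $t$ for each $x$. Continuity of $v$ on $\overline{Q_1}$ together with convexity gives that $w$ is locally uniformly Lipschitz in $x$.

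Next I will treat the derivatives of $w$ as measures. Since $w$ is convex in $x$, the distributional Hessian $D^2_xw$ is a nonnegative matrix-valued Radon measure on $Q_1$. Since $w$ is monotone in $t$, $-\partial_tw$ is a nonnegative Radon measure. I decompose both into absolutely continuous and singular parts, with densities $M(x,t)$ and $\beta(x,t)$, and take $\nabla_x w$, which is $L^\infty_{loc}$. I then let $\mathcal N$ be the complement of the set of points $(x_0,t_0)$ where the following hold simultaneously:
\begin{enumerate}
\item $(x_0,t_0)$ is a Lebesgue point, with respect to parabolic cylinders $Q_\rho^B$ and $Q^F_\rho$, of $\nabla_xw$, $M$ and $\beta$;
\item the singular parts of both measures have density zero, i.e.\ their mass in $Q_\rho$ is $o(\rho^{n+2})$.
\end{enumerate}
Parabolic cylinders form a doubling family, so the Lebesgue differentiation and Besicovitch-type density theorems hold for them, and hence $|\mathcal N|=0$. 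The candidate polynomial $p$ is built from $a=w(x_0,t_0)$, $b=\nabla_xw(x_0,t_0)$, $\beta$ and $M$, and is then converted back to $v$ by subtracting $\tfrac K2|x|^2-Kt$.

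The main step is to upgrade these averaged statements to the pointwise estimate $\sup_{Q_\rho}|r|=o(\rho^2)$, where $r=w-p$. I plan to argue as follows.
\begin{enumerate}
\item Rescale $r_\rho(\xi,\tau)=\rho^{-2}r(x_0+\rho\xi,t_0+\rho^2\tau)$ on a fixed cylinder. By the two conditions above, the measures $D^2_\xi r_\rho$ and $\partial_\tau r_\rho$ tend to $0$ in total variation, and $\nabla_\xi r_\rho\to0$ in $L^1$.
\item A Poincaré-type inequality in space--time, valid because both the $x$- and $t$-derivatives are measures with small mass, then gives $\|r_\rho-c_\rho\|_{L^1}\to0$ for some constants $c_\rho$. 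Continuity of $w$ at $(x_0,t_0)$ forces $c_\rho\to0$.
\item To pass from $L^1$ to $L^\infty$, use the structure of $r_\rho$. It is the difference of a function convex in $\xi$ and a quadratic with bounded coefficients, and it is, up to a vanishing linear error, monotone in $\tau$.
\item For a convex function of $\xi$, its supremum on a ball is controlled by its average on a slightly larger ball, and its infimum by the average together with a Lipschitz bound. Monotonicity in $\tau$ lets me transfer the slicewise bounds from nearby time slices on either side to every $\tau$.
\end{enumerate}
This yields $\sup|r_\rho|\to0$, which is the claimed $o(|\xi-x|^2+|t-s|)$ expansion.

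I expect the $L^1$-to-$L^\infty$ step to be the main obstacle, because the regularity is anisotropic: there is convexity in $x$ but only one-sided monotonicity in $t$. The part that needs care is showing that a small $L^1$ norm on a cylinder, combined with slice convexity and time monotonicity, forces a small supremum on a smaller cylinder. I would first try the following sandwich. Fix $\tau$ and compare $r_\rho(\cdot,\tau)$ with the slices at $\tau\pm\eta$. By monotonicity these bound $r_\rho(\cdot,\tau)$ from above and below, up to an error of $O(\eta)$ coming from $\beta$. Averaging over the small time intervals $[\tau-2\eta,\tau-\eta]$ and $[\tau+\eta,\tau+2\eta]$ and applying the convex mean-value bound then gives the pointwise estimate.
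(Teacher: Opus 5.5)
The paper does not prove this lemma; it quotes Theorem 2.8 of Wang, so your argument has to stand on its own. Most of it does. The reduction to $w=v+\tfrac K2|x|^2-Kt$ (convex in $x$, nonincreasing in $t$), the choice of good points, the total-variation convergence $|D^2_\xi r_\rho|+|\partial_\tau r_\rho|\to0$, and the final sandwich are all sound. (For the sandwich: mean-value inequality plus past slices give the upper bound; a Lipschitz bound plus future slices give the lower bound.)

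\textbf{The gap.} It is your assertion in step 1 that $\nabla_\xi r_\rho\to0$ in $L^1$. Since $\nabla_\xi r_\rho(\xi,\tau)=\rho^{-1}\bigl(\nabla_xw-b-M(x-x_0)\bigr)$ evaluated at $(x_0+\rho\xi,t_0+\rho^2\tau)$, this claim amounts to
\[
\frac{1}{|Q_\rho|}\int_{Q_\rho}\bigl|\nabla_xw-b-M(x-x_0)\bigr|\,dx\,dt=o(\rho),
\]
where $Q_\rho$ is the parabolic cylinder of size $\rho$ centred at $(x_0,t_0)$. A Lebesgue point of $\nabla_xw$ only gives $o(1)$ here, not $o(\rho)$.

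In the elliptic proof of Alexandrov's theorem, this $o(\rho)$ bound is the a.e.\ $L^1$-differentiability of the $BV$ function $\nabla w$. That uses the fact that every derivative of $\nabla w$ is a measure. Here $\partial_t\nabla_xw=\nabla_x\partial_tw$ is only a derivative of a measure, and it is not controlled by your data. For $n=1$, take $w=2x^2-2t-\omega^{-2}\sin(\omega x)\bigl(1-\cos(\omega^2t)\bigr)$. It is convex in $x$ and decreasing in $t$, with $|w_{xx}|+|w_t|\le 9$, yet $\int_{(0,1)^2}|w_{xt}|\,dx\,dt$ grows like $\omega$. So the first-order Poincar\'e inequality in step 2 has no valid input.

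The same scaling slip appears when you say that continuity of $w$ at $(x_0,t_0)$ forces $c_\rho\to0$. Continuity only gives $r_\rho=o(\rho^{-2})$.

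\textbf{A repair.} Avoid first-order information altogether. Use the Poincar\'e inequality whose kernel consists of functions affine in $\xi$ and independent of $\tau$:
\[
\inf_{c\in\mathbb{R},\,d\in\mathbb{R}^n}\|f-c-d\cdot\xi\|_{L^1(Q)}\le C\bigl(|D^2_\xi f|(Q)+|\partial_\tau f|(Q)\bigr).
\]
This follows by the usual compactness argument: the $L^1$ norm and $D^2_\xi f$ control $\nabla_\xi f$ slice by slice, so normalized sequences are bounded in $BV(Q)$.

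It yields $r_\rho-c_\rho-d_\rho\cdot\xi\to0$ in $L^1$. Your sandwich applies verbatim to this function, since it has the same convexity and monotonicity structure. That gives uniform convergence on a smaller cylinder, and then $r_\rho(0,0)=0$ gives $c_\rho\to0$.

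For $d_\rho$, add to the good set the requirement that the slice $w(\cdot,t_0)$ be differentiable at $x_0$ with gradient equal to $b$. This holds a.e.\ by differentiability of convex functions and Fubini. Convexity of that slice then gives $r_\rho(\xi,0)+\tfrac12\xi^TM\xi\ge0$. Test this at $\xi=-s\,d_\rho/|d_\rho|$ against $r_\rho(\xi,0)=d_\rho\cdot\xi+o(1)$. You get $|d_\rho|\le\tfrac12|M|s+o(1)/s$, so $d_\rho\to0$ after letting $\rho\to0$ and then $s\to0$.
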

Since the envelopes $E_\sigma(u)$ are uniformly semi-convex (as they are touched by concave paraboloids of opening $\sigma$ from below), Lemma \ref{wdiff} ensures a.e. differentiability of $E_\sigma(u)$.\\

We now state our main result,
\begin{theorem}(Weak Harnack inequality)\label{he}
    Let $u$ be a nonnegative viscosity supersolution, $u_t-\mathcal{M}^-_{\Lambda,\lambda}(D^2_xu)\geq 0$ on $Q_2$. Then for a constant $\e=\e(n,\lambda,\Lambda)$,
    \begin{equation}
        |\{u\geq C(\Lambda,\lambda,n)u(0)t\}\cap \B_1|\leq |\B_1| t^{-\e}.
    \end{equation}
    In particular $\e$ can be taken as $\e =  \bigO(\lambda^n(n\Lambda+1)^{-n})$.
\end{theorem}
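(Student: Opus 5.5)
We follow the global scheme sketched in the introduction, adapted from Mooney's elliptic argument. First normalize: by the scaling $E_\sigma(u)=\sigma E_1(u/\sigma)$ and $A_\sigma(u)=A_1(u/\sigma)$ we may assume $u(0)\le 1$ (the statement is trivial if $u(0)=0$ after a perturbation $u+\eta$, $\eta\to 0$). The goal is a geometric decay
\[
|\B_1\setminus A_{8^k}| \le (1-\mu)^k|\B_1|, \qquad \mu=c\,\lambda^n(n\Lambda+1)^{-n},
\]
where $A_{8^k}=A_{8^k}(u)$ is the contact set with the envelope $E_{8^k}$ inside $\B_1$. Since on $A_{8^k}\cap\B_1$ the function $u$ is touched from below by a concave paraboloid of opening $8^k$ whose value is controlled by $u(0)$, we have $u\le C8^k u(0)$ there (the paraboloid must lie below $u$ at $(0,0)$ and has bounded oscillation over $\B_1$ for opening $8^k$). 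Consequently $\{u\ge C u(0) 8^k\}\cap\B_1\subset \B_1\setminus A_{8^{k}}$. Interpolating $t\in[8^k,8^{k+1})$ then gives the stated bound with $\e=-\log_8(1-\mu)\approx\mu$, i.e.\ $\e=\bigO(\lambda^n(n\Lambda+1)^{-n})$.

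The base step $k=0$ is an ABP-type estimate, which is Lemma \ref{pgn} in its first instance. Slide concave paraboloids of opening $1$ with vertices $(y,s)$ ranging over a fixed region from below until they touch $u$. Lemma \ref{smp} with the normalization $u(0)\le1$ and $u\ge0$ forces the touching points into the interior, in a quantitative neighbourhood inside $Q_2$. At a.e.\ contact point the envelope is second-order differentiable by Lemma \ref{wdiff}. The supersolution property gives $\partial_t E-\mathcal{M}^-(D^2E)\ge 0$ there, together with $D^2E\ge -\sigma I$ and $\partial_t E\le \sigma$. The map (contact point) $\mapsto$ (vertex) has Jacobian bounded by $\bigl((n\Lambda+1)\sigma/\lambda\bigr)^{n}\sigma^{-n}$ up to constants, which is where the exponent $\lambda^n(n\Lambda+1)^{-n}$ enters. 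The area formula then yields $|A_1\cap\B_1|\ge\mu|\B_1|$.

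The inductive step is the heart of the argument. Given $E_{8^k}$, at each point of $\B_1\setminus A_{8^k}$ we take paraboloids of opening $8^{k+1}$ tangent from below to $E_{8^k}$ (which is possible by semi-convexity, since $8^{k+1}>8^k$) and flow them forward in time. Lemma \ref{tching} says that, because supersolutions have no upward corners, these steeper paraboloids actually touch $u$ itself at new points. The contact points lie away from $A_{8^k}$, or at least the new contact set covers a $\mu$-fraction of the old complement. Applying the same area-formula computation to the vertex map of these new paraboloids, with vertex set of measure comparable to $|\B_1\setminus A_{8^k}|$ after rescaling by $8^{k+1}$, gives
\[
|(A_{8^{k+1}}\setminus A_{8^k})\cap \B_1|\ge \mu\,|\B_1\setminus A_{8^k}|,
\]
and hence the decay.

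The main obstacle is exactly this measure transfer. We must show that the vertices generated from $\B_1\setminus A_{8^k}$ fill a set of measure proportional to that complement, and that the resulting contact points stay inside $\B_1$ or a controlled enlargement of it, without any covering argument. To handle this, the first thing we would try is to choose the flow time $\delta\sim 8^{-k}$ in $P_{8^{k+1};\delta}$ so that each new contact point is parabolically close to its base point. We would then use Lemma \ref{smp} to absorb boundary effects quantitatively, working on slightly enlarged backward balls that shrink geometrically so that their union stays within $Q_2$.
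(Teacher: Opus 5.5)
There is a genuine gap, and it is exactly the step you flag as the main obstacle and then leave open. Nothing in the proposal shows that the steeper paraboloids touch $u$ at all. Nor does it show that any contact lands inside the region where you count measure, or that $u$ is bounded on the contact sets.

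\emph{Contact is not automatic.} Lemma \ref{tching} is conditional. It only says that \emph{if} the flowed $P_{\sigma^2}$ touches $u$ in $Q_1$, then the contact lies in $A_{\sigma^2}\setminus A_\sigma$; contact must be forced separately.

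\emph{The bound on $A_{8^k}$ is not justified.} Your claim $u\le C8^ku(0)$ on all of $A_{8^k}\cap\B_1$ fails as argued. A paraboloid in the definition of $E_{8^k}$ may have a short flow time, so it need not be constrained at $(0,0)$. Its spatial gradient $8^k(y-x)$ is also uncontrolled unless the vertex is controlled, so there is no ``bounded oscillation''. Near $\partial\B_1$ no uniform bound of this type is available, because the constant in Lemma \ref{smp} blows up like $\e^{-2M}$, $M=\alpha+1$, as the radius $\e$ of the spatial ball shrinks.

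\emph{Your proposed fix fails.} With flow time $\delta\sim 8^{-k}$ the paraboloid rises only by about $8^{k+1}\delta\sim 8$ on a ball of radius $\sim\sqrt{\delta}$. On such a ball Lemma \ref{smp} only rules out $u\gtrsim \delta^{-M}=8^{kM}$, so contact is not forced. Forcing it this way requires $8^{k}\tau\gtrsim\tau^{-M}$, i.e.\ a localization scale decaying like $8^{-k/(2M+2)}$ rather than $8^{-k/2}$. Moreover, tangency points arbitrarily close to the top of $\B_1$ can produce contacts outside $\B_1$, whatever the flow time. So the inequality $|(A_{8^{k+1}}\setminus A_{8^k})\cap\B_1|\ge\mu|\B_1\setminus A_{8^k}|$ cannot hold as stated on the fixed set $\B_1$.

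The missing idea is the paper's shrinking-domain dichotomy. It uses the sets $\B_{\e_k}=\B_{(0,-\e_k)}\cap Q_{1-\e_k}$ with $\e_k=2^{-1}4^{-(k+1)/(M+2)}$, which increase to $\B_1$. Lemma \ref{smp} is then applied with balls of radius comparable to $\e_k$, in three ways:
\begin{enumerate}
\item It gives $u\le\sigma^k$ on $A_{\sigma^k}\cap\B_{\e_k}$, by moving uphill from the contact point to a ball of radius $\e_k/2$ on which $u\ge\sigma^k/2$.
\item It confines the vertices of the $P_{\sigma^{k+1}}$ tangent to $E_{\sigma^k}$ at $p_0\in\B_{\e_k}$ to $Q^B_{p_0}(\delta_{k+1}/4)$, where $\delta_{k+1}=\e_{k+1}/4$.
\item It forces contact inside $Q^F_p(\delta_{k+1}/2)\subset\B_{\e_{k+1}}$.
\end{enumerate}
Together these yield \eqref{il}, which counts only tangency points in $\B_{\e_k}$.

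The induction then splits into two cases. If at least half of $\B_1\setminus A_{\sigma^k}$ lies in the layer $\B_1\setminus\B_{\e_k}$, then $|\B_1\setminus A_{\sigma^k}|\le 2\e_k|\B_1|$, which is already geometrically small. Otherwise Lemma \ref{pgn} gives a gain of a $c'_\sigma/2$ fraction. This is why $\e$ must also satisfy $\e\le(M+2)^{-1}$, a constraint your exponent omits (it is harmless asymptotically).

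Your ``enlarged backward balls that shrink geometrically'' also run the wrong way. New contacts occur later in time, so the admissible region must grow with $k$ by summable increments. That is precisely what $\B_{\e_k}\uparrow\B_1$ achieves.
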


As discussed in \cite{wang_small_2012} and \cite{JPD} it suffices to assume that any supersolution is semi-concave. This is achieved by using inf convolution with spacetime paraboloids. See \cite[Lemma 4.2]{wang_small_2012} for an exact statement. This reduction ensures that the maps to which we apply the area formula are Lipschitz. Therefore we will prove Theorem \ref{he} under this assumption. 

It is now a standard consequence of the Theorem \ref{he} to obtain interior Hölder regularity for solutions to equation \eqref{main_pde},
$$\|u\|_{C^{\alpha}_{x,t}(\B_1)} \leq C(n,\lambda,\Lambda)\|u\|_{L^{\infty}(Q_2)},$$
with $\alpha\sim \exp\{c(n)(\Lambda/\lambda+\frac{1}{\lambda})^{-n}\}$ (Here the $C^\alpha_{x,t}$ norm roughly denotes the $\alpha-$Hölder norm in the space variable and $\alpha/2-$Hölder norm in the time variable. See \cite{JPD} for the exact definition of $C^{\alpha}_{x,t}(\B_1)$). 

Now defining,
$$\underline{\Theta}(u)(x,t):=\inf\{\sigma>0\;:\;P^{(y,s)}_{\sigma,t-\delta}\text{ touches u from below at }(x,t)\},$$
we can also derive the following estimate as an immediate corollary of our proof.
\begin{theorem}[$W^{2,\epsilon}$ estimate]\label{tht}
    Let $u$ be a nonnegative viscosity supersolution, $u_t-\mathcal{M}^-_{\Lambda,\lambda}(D^2_xu)\geq 0$ on $Q_2$. Then for the $\e(n,\lambda,\Lambda)$ as in Theorem \ref{he} we have,
    $$|\{\underline{\Theta}(u)\geq C'(\Lambda,\lambda,n)u(0)t\}\cap \B_1| \leq |\B_1|t^{-\e}.$$
\end{theorem}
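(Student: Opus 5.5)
The plan is to deduce Theorem \ref{tht} from the measure estimates for the contact sets $A_{8^k}$ that the proof of Theorem \ref{he} produces. By the scaling $E_\sigma(u)=\sigma E_1(u/\sigma)$ and $A_\sigma(u)=A_1(u/\sigma)$, we may normalize $u(0)=1$.

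\textbf{Step 1 (measure decay of the contact sets).} The iteration built on Lemma \ref{tching} and Lemma \ref{pgn}, with boundary contacts excluded through Lemma \ref{smp}, should give a bound of the form
$$|\B_1\setminus A_{M8^k}(u)|\leq (1-c)^k|\B_1|,$$
where $M=M(n,\lambda,\Lambda)$ and $c=c(n,\lambda,\Lambda)$ is the universal fraction of new contact gained at each step. The paper's proof of Theorem \ref{he} proceeds this way and converts this decay into the level-set bound using $u\leq E_{\sigma}$-type comparisons. Here I will use it directly.

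\textbf{Step 2 (contact forces $\underline{\Theta}\le\sigma$).} Take $(x,t)\in A_\sigma(u)\cap\B_1$. By the definition of $E_\sigma$ and the closedness of the sup, there is a concave paraboloid $P^{(y,s)}_{\sigma;\delta}\le u$ with equality at $(x,t)$. Since the paraboloid lies below $u$ on a backward neighbourhood of $(x,t)$, it touches $u$ from below there in the sense of the definition.
\begin{itemize}
\item If the paraboloid is defined on $[t-\delta',t]$, the contact is admissible and $\underline{\Theta}(u)(x,t)\le\sigma$.
\item If instead $t=s$, so that the vertex time equals the contact time, we shift the vertex slightly backward and increase the opening by a bounded factor. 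This yields a concave paraboloid of opening at most $2\sigma$ still touching from below, because $P_{\sigma}$ with an earlier vertex time lies below.
\end{itemize}
Hence $\{\underline{\Theta}(u)>2\sigma\}\cap\B_1\subset\B_1\setminus A_\sigma(u)$.

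\textbf{Step 3 (conclusion).} Combining the two steps with $\sigma=M8^k$ gives
$$|\{\underline{\Theta}\ge 2M8^k\}\cap\B_1|\le(1-c)^k|\B_1|.$$
Given $\tau\ge 1$, choose $k$ with $2M8^k\le\tau<2M8^{k+1}$. This yields
$$|\{\underline{\Theta}\ge C'\tau\}\cap\B_1|\le|\B_1|\tau^{-\e},$$
with $\e=-\log(1-c)/\log 8$, the same exponent as in Theorem \ref{he}, after taking $C'$ large enough to absorb the range $\tau<C'$ trivially.

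The main obstacle is Step 1, which is the content of the paper's proof. A secondary point needs care in Step 2: confirming that contact points in $\B_1$ are genuine interior contacts with a paraboloid living on a nontrivial backward time interval. Interiority is ensured by Lemma \ref{smp}, which keeps contacts away from $\partial Q_2$. The time-interval issue is handled by the vertex-shift above.
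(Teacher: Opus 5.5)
Your argument is essentially the paper's: Remark \ref{tr} gets Theorem \ref{tht} directly from the decay \eqref{main_decay} of $|\B_1\setminus A_{\sigma^k}|$, together with the observation that $u$ is touched from below by a paraboloid of opening $\sigma$ at every point of $A_\sigma$, which is exactly your Steps 1--3. The one flaw is the vertex shift in Step 2: it is unnecessary, and it is wrong as written, because moving the vertex earlier raises the paraboloid so it sits strictly above $u$ at the contact point; instead, note that contact at $t=s$ forces $u(x,t)=0$ and $x=y$, and the formula $\sigma(\tau-s-|\xi-y|^2/2)$ extended to $\tau<s$ is negative, hence below $u\geq 0$, so $\underline{\Theta}(u)\leq\sigma$ on all of $A_\sigma\cap\B_1$ without the factor $2$.
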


Note that for $\delta = \epsilon/2$, solutions to equation \ref{main_pde} have an $L^{\delta}(\B_1)$ estimate for second order spatial derivatives and first order time derivatives. It was shown in \cite{JPD} that the statement of Theorems \ref{he} and \ref{tht} hold for $\epsilon \sim \lambda^{n+1}(n\Lambda+1)^{-n-2}$. We improve the dependence further to $\epsilon \sim \lambda^n(n\Lambda+1)^{-n}$. We discuss this improvement in remark \ref{improvement}.

 \section{Supporting lemmas} \label{support}
 We now derive some important lemmata that allow us to prove Theorem \ref{he}. 
\begin{lemma}\label{tching}
    Assume $u\in C(\overline{Q_1})$ solves $u_t-\mathcal{M}^{-}_{\Lambda,\lambda}(D^2u)\geq 0 $. Consider some concave paraboloid $P_{\sigma^2}$ touching $E_\sigma(u)$ from below at $(x_0,t_0)\in Q_1\setminus A_\sigma(u)$. If $P_{\sigma^2}$ flows further in time and touches $u$ in $Q_1$ at a point $(x',t')$, then $(x',t') \in A_{\sigma^2}(u)\setminus A_\sigma(u)$.

\end{lemma}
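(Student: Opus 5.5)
The plan is to prove the two memberships separately. The first, $(x',t')\in A_{\sigma^2}(u)$, is essentially by definition. The second, $(x',t')\notin A_\sigma(u)$, goes by contradiction: compare the first-order jets of $P_{\sigma^2}$ and of a $\sigma$-paraboloid supporting $u$ at $(x',t')$, and use the supersolution property to rule out an upward corner.

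\emph{Step 1: $(x',t')\in A_{\sigma^2}(u)$.} By assumption, $P_{\sigma^2}\le E_\sigma(u)\le u$ near $(x_0,t_0)$. "Flowing further in time" means we extend $P_{\sigma^2}$ (same vertex, larger $\delta$) until the first time it touches $u$ in $\overline{Q_1}$. Up to that time it stays below $u$, so it is admissible in the definition of $E_{\sigma^2}(u)$. Hence $u(x',t')=P_{\sigma^2}(x',t')\le E_{\sigma^2}(u)(x',t')\le u(x',t')$, which gives $(x',t')\in A_{\sigma^2}(u)$. I would state explicitly the convention that the extension is stopped at the first contact, so that $P_{\sigma^2}\le u$ holds on its whole domain.

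\emph{Step 2: setup for the contradiction.} Suppose $(x',t')\in A_\sigma(u)$. Then some $P_\sigma=P^{(y,s)}_{\sigma;\delta}\le u$ satisfies $P_\sigma(x',t')=u(x',t')$. Consider $\psi=\max(P_\sigma,P_{\sigma^2})$. It satisfies $\psi\le u$ on a backward neighbourhood $Q^B_\e((x',t'))$ and $\psi(x',t')=u(x',t')$.

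\emph{Case (a): different spatial gradients.} Suppose $\nabla_x P_\sigma(x',t')\neq\nabla_x P_{\sigma^2}(x',t')$. Then $\psi$ has a genuine upward corner in $x$ at $(x',t')$. I would place a smooth function below $\psi$, touching at $(x',t')$, of the form
\[
\varphi=\ell(x,t)+\tfrac{M}{2}\big((x-x')\cdot e\big)^2-K(t'-t)-\tfrac{1}{2}|x-x'|^2 .
\]
Here $\ell$ is the affine function equal to the average of the two supporting planes, and $e$ is the unit vector along the gradient difference. Because the corner gives room of size $c|(x-x')\cdot e|$, the function $\varphi$ stays below $\psi$ for arbitrarily large $M$. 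Then $\varphi_t-\mathcal{M}^+_{\Lambda,\lambda}(D^2\varphi)\le C-\lambda M<0$ for $M$ large, contradicting the viscosity supersolution property of $u$. This is the ``no upward corners'' principle mentioned in the introduction.

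\emph{Case (b): equal spatial gradients.} Suppose $\nabla_xP_\sigma(x',t')=\nabla_xP_{\sigma^2}(x',t')$. Both functions agree at $(x',t')$, and (for $\sigma>1$, the relevant regime) a direct computation gives
\[
P_\sigma-P_{\sigma^2}=(\sigma^2-\sigma)\Big(t'-t+\tfrac{|x-x'|^2}{2}\Big),
\]
which is strictly positive for $t<t'$. In particular $P_\sigma(x_0,t_0)>P_{\sigma^2}(x_0,t_0)=E_\sigma(u)(x_0,t_0)$. On the other hand $P_\sigma\le E_\sigma(u)$ wherever $P_\sigma$ is defined, since it is an admissible competitor. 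This is a contradiction, provided $(x_0,t_0)\ne(x',t')$. That holds because $(x_0,t_0)\notin A_\sigma(u)$ while we assumed $(x',t')\in A_\sigma(u)$.

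\emph{Main obstacle.} The delicate point is in Case (b): the supporting paraboloid $P_\sigma$ at $(x',t')$ is only defined for $t\ge s$, so it may not reach back to time $t_0$. To handle this, I would first try to replace $P_\sigma$ by the $\sigma$-paraboloid supporting $E_\sigma$ itself. Such paraboloids can be chosen with the same vertex along the backward time direction, because shrinking the time interval keeps them below $u$. If that fails, I would instead argue only locally near $(x',t')$: the strict inequality $P_\sigma>P_{\sigma^2}$ for $t<t'$ shows that $\psi=P_\sigma$ on a backward neighbourhood. Then $P_{\sigma^2}$ lies strictly below $u$ just before time $t'$ near $x'$, and I would combine this with the first-contact property of the flow and the semi-convexity of $E_\sigma(u)$ to reach a contradiction.
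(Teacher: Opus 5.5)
Your architecture matches the paper's: argue by contradiction, rule out tangency of $P_\sigma$ and $P_{\sigma^2}$ at $(x',t')$, then kill the upward corner with a barrier. However, Case (b) is not finished as written. The chain $P_\sigma(x_0,t_0)>P_{\sigma^2}(x_0,t_0)=E_\sigma(u)(x_0,t_0)\ge P_\sigma(x_0,t_0)$ needs $P_\sigma$ to be an admissible competitor at time $t_0$, i.e.\ its vertex time $s$ must satisfy $s\le t_0$, and you leave this open. Neither proposed remedy closes it. Shrinking the time interval never moves the initial time $s$ earlier. Moreover, in Case (b) the vertex of $P_\sigma$ is completely determined by its value and spatial gradient at $(x',t')$, so there is nothing to choose. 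The local remark that $P_{\sigma^2}<u$ just before $t'$ near $x'$ is simply what first contact means. Likewise, $\psi=P_\sigma$ being smooth there gives no contradiction with the equation.

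The missing step is a short vertex computation using $u\ge 0$ (built into the definition of $E_\sigma$) and $\sigma>1$. Let $(y,s)$ and $(y_2,s_2)$ be the vertices of $P_\sigma$ and $P_{\sigma^2}$, and set $u'=u(x',t')$. Equal spatial gradients give $y-x'=\sigma(y_2-x')$, and equal values then give
\[
s_2-s=\Big(\frac{1}{\sigma}-\frac{1}{\sigma^2}\Big)u'+\frac{\sigma^2-1}{2}\,|x'-y_2|^2\;\ge\;0 .
\]
Hence $s\le s_2\le t_0\le t'\le s+\delta$, so $P_\sigma$ is defined at $(x_0,t_0)$ and your contradiction goes through; the obstacle is in fact vacuous.

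The paper sidesteps the issue differently. It compares on the slice $t=t'$, where $P_\sigma$ is automatically defined, using the growth bound $E_\sigma(x_0,t)\le P_0(x_0,t)$ for $t\ge t_0$, with $P_0$ a $\sigma$-paraboloid touching $E_\sigma$ at $(x_0,t_0)$. Since $P_{\sigma^2}$ grows at rate $\sigma^2>\sigma$, this gives $P_\sigma(x_0,t')\le E_\sigma(x_0,t')<P_{\sigma^2}(x_0,t')$. Tangency at $(x',t')$ with the larger opening would instead force $P_{\sigma^2}\le P_\sigma$ on that slice. Your repaired route avoids needing that growth bound on $E_\sigma$.

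There is also a small slip in Case (a). With $-\tfrac12|x-x'|^2$, your $\varphi$ lies above $\psi$ in directions orthogonal to $e$, where $\psi=\ell-\tfrac{\sigma}{2}|x-x'|^2$ at $t=t'$. Use $-\tfrac N2|x-x'|^2$ with $N\ge\sigma^2$ and $K\ge\tfrac{\sigma^2-\sigma}{2}$; then $\varphi\le\psi$ on $|x-x'|<|\nabla_xP_\sigma-\nabla_xP_{\sigma^2}|/M$, $t\le t'$, and $\lambda M$ still dominates in the equation.
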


\begin{proof}
     We argue by contradiction. Assume that $(x',t')\in A_\sigma(u)$. Then there exists a concave paraboloid, $P_\sigma$, touching $E_\sigma(u)$ (and hence $u$) from below at $(x',t')$. Also assume $P_0$ to be a concave paraboloid of opening $\sigma$ touching $E_\sigma(u)$ from below at $(x_0,t_0)\in Q_1\setminus A_\sigma(u)$.

     By definition of $E_\sigma$, $E_\sigma(x_0,t) \leq P_0(x_0,t)$ for all time $t \geq t_0$. Since the slope of $P_{\sigma^2}$ is greater than $P_0$, therefore for $t>t_0$, $E_\sigma(x_0,t) < P_{\sigma^2}(x_0,t)$. Thus $P_\sigma(x_0,t')<P_{\sigma^2}(x_0,t')$. However we also have $P_{\sigma^2}(x',t') = P_\sigma(x',t')$. Since $P_{\sigma^2}$ has a bigger opening than $P_\sigma$, at time $t=t'$ $P_{\sigma^2}$ and $P_{\sigma}$ cannot be tangent to each other at $(x',t')$.
      
     Now consider $P = \max\{P_\sigma,P_{\sigma^2}\}$ at $t=t'$. This shape sits below $u$ and touches $u$ at $(x',t')$ and has an upward corner at $(x',t')$. Now by rotating the spatial coordinates, translating by a linear function in the space variable and rescaling appropriately, we can transform the shape $P$ at $t=t'$ to $B = \max\{1-|x|^2,0\}$, such that it touches $u$ at $x=(1,0,\dots,0)$. While performing these changes our equation changes to $u_t-\mathcal{M}^{-}_{\lambda,\Lambda}(D^2u)\geq -n\lambda \sigma $. Now using the barrier as in \cite{mooney_proof_2019}, $\psi = 1-|x|+A(|x|-1)^2$, we can construct $g(x,t) = \psi(x)+M(t-s)$ for large $M$ and $s$ such that $t'-s$ is small and $g$ sits below $u$. However it cannot touch $u$ from below since $g_t-\mathcal{M}^-_{\Lambda,\lambda}(D^2_xg) = M-2\lambda A + (n-1)\Lambda < -n\lambda \sigma $ for large $A$. 
\end{proof}

The following lemma exploits semiconvexity of the envelope which allows us to obtain information (in measure) about their vertices from their contact points. 

\begin{lemma}\label{tf}
    For a continuous positive function $u\in C(\overline{Q_1})$, consider the envelope $E_\sigma$. For a set of contact points $\T\subseteq Q_1$ to $E_\sigma$ by concave paraboloids, $P_{\sigma^2}$, the vertices of these shapes, $V$, will have measure atleast a fraction of $\T$, i.e.,
    $$|V| \geq \left(1-\frac{1}{\sigma}\right)^{n+1}|\T|.$$

\end{lemma}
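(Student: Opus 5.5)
The idea is to realize the vertices as the image of the contact points under an explicit map, and to compute its Jacobian via the area formula. At a point $(x,t)\in\T$ where $E_\sigma$ has the second order expansion of Lemma \ref{wdiff}, a concave paraboloid $P^{(y,s)}_{\sigma^2;\delta}$ touching $E_\sigma$ from below must match value and spatial gradient there. Write $E=E_\sigma$. Matching gradients gives $\nabla E(x,t) = -\sigma^2 (x-y)$. Matching values gives $E(x,t)=\sigma^2\bigl(t-s-\tfrac{|x-y|^2}{2}\bigr)$. Solving, the vertex map is
$$T(x,t) = \Bigl(x+\sigma^{-2}\nabla E(x,t),\; t-\sigma^{-2}E(x,t)-\tfrac{1}{2}\sigma^{-4}|\nabla E(x,t)|^2\Bigr).$$
After the reductions of Section \ref{Preliminaries}, $E$ is semi-convex with opening $\sigma$ and also semi-concave, so $T$ is Lipschitz on $\T$. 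Since $T(\T)=V$, the area formula gives $|V|\ge \int_{\T}|\det DT|$, using that $T$ is at worst finitely many to one, or simply $|V|\ge|\T|\inf|\det DT|$ after a first-order argument. So it suffices to show $|\det DT|\ge (1-1/\sigma)^{n+1}$ a.e. on $\T$.

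Next we compute $DT$ at a.e. point using $D^2E=M$, $E_t=\beta$ and $\nabla E=b$. The spatial block is $I+\sigma^{-2}M$. The derivative of the time component in $t$ is $1-\sigma^{-2}\beta$, up to mixed terms involving $\nabla E_t$. The key simplification is to use the structure of $T$. Substituting $\partial_x$ of the time component gives $-\sigma^{-2}b-\sigma^{-4}Mb=-\sigma^{-2}(I+\sigma^{-2}M)b$, which is $-\sigma^{-2}b$ times the spatial row block. Row-reducing the time row by subtracting $-\sigma^{-2}b^T$ times the spatial rows kills these off-diagonal entries. The mixed $\partial_t\nabla E$ terms then cancel in the same way, leaving a block-triangular matrix with
$$\det DT=\det(I+\sigma^{-2}M)\,(1-\sigma^{-2}\beta).$$
This cancellation is the computation to check carefully. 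One has to handle the fact that $\nabla E$ need not be differentiable in $t$; I would argue via the expansion of Lemma \ref{wdiff} along with a limiting/approximation argument.

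Finally we bound the eigenvalues. Since $E_\sigma$ is touched from below at every point by concave paraboloids of opening $\sigma$, we get $M\ge -\sigma I$ and $\beta\le\sigma$. Thus each eigenvalue of $I+\sigma^{-2}M$ is at least $1-1/\sigma$, and $1-\sigma^{-2}\beta\ge 1-1/\sigma$. Since $P_{\sigma^2}$ touches $E$ from below, $M\le\sigma^2 I$ and $\beta\ge-\sigma^2$. Together these keep the factors nonnegative, so that $|\det|$ equals the product. This yields $|\det DT|\ge(1-1/\sigma)^{n+1}$ for $\sigma>1$, and the claimed inequality follows from the area formula.

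The main obstacle is justifying the area formula step. We need $T$ to be Lipschitz on $\T$, which uses the semi-concavity reduction, and we need the Jacobian computed from the a.e. expansion to be the true a.e. differential. If needed, I would restrict $\T$ to points of twice-differentiability and apply Lusin-type approximation of $T$ by $C^1$ maps.
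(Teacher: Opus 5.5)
\textbf{Gap: the area-formula step needs injectivity.} Your Jacobian computation is right and matches the paper: row reduction gives $\det DT=(1-\sigma^{-2}\beta)\det(I+\sigma^{-2}M)$, and $M\ge-\sigma I$, $\beta\le\sigma$ give the pointwise bound. The failure is in passing from this bound to $|V|$. For a Lipschitz $T$ the area formula reads
\[
\int_{\T}|\det DT|=\int_{\mathbb{R}^{n+1}}\#\bigl(T^{-1}(z)\cap\T\bigr)\,dz ,
\]
so it gives $|V|\ge\int_{\T}|\det DT|$ only if $T$ is injective up to null sets.

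Neither of your substitutes works. If $T$ is $k$-to-one you only get $|V|\ge\frac1k\int_{\T}|\det DT|$, which loses the constant, and you give no multiplicity bound anyway. The claim $|V|\ge|\T|\inf|\det DT|$ is false for non-injective maps: the tent map $x\mapsto\min\{x,1-x\}$ on $[0,1]$ has $|T'|=1$ but its image has measure $1/2$. A pointwise Jacobian bound cannot rule out folding, so a global argument is needed.

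\textbf{Missing idea: injectivity of the contact-to-vertex map.} This is where the envelope structure of $E_\sigma$, rather than mere semi-convexity, is used. Suppose one $P_{\sigma^2}$ touches $E_\sigma$ at two points $(x_1,t_1)\neq(x_2,t_2)$ of $\T\setminus\mathcal N$ with $t_1\le t_2$. Let $P_2$ be an opening-$\sigma$ paraboloid from the family defining $E_\sigma$ that touches at $(x_2,t_2)$, so $E_\sigma\ge P_2$. Since $P_2$ and $P_{\sigma^2}$ agree in value and spatial gradient at $(x_2,t_2)$,
\[
P_2-P_{\sigma^2}=(\sigma^2-\sigma)\Bigl(\tfrac{|x-x_2|^2}{2}+t_2-t\Bigr),
\]
which is strictly positive at $(x_1,t_1)$. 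Hence $E_\sigma(x_1,t_1)\ge P_2(x_1,t_1)>P_{\sigma^2}(x_1,t_1)=E_\sigma(x_1,t_1)$, a contradiction. With this, the area formula gives exactly the claimed bound.

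\textbf{Two secondary corrections.}
\begin{enumerate}
\item Your assertion that $E_\sigma$ is semi-concave ``after the reductions'' is unjustified. The reduction makes $u$ semi-concave, but in Lemma~\ref{pgn} the set $\T$ lies off $A_\sigma(u)$, where $E_\sigma\neq u$, and a supremum of paraboloids can have convex kinks. The paper instead works on the sets $\T_N$ where $E_\sigma$ is touched from above by paraboloids of opening $N$, proves the bound there, and exhausts $\T$ up to a null set. Your Lusin-type fallback is the same idea and is fine.
\item Touching from below by $P_{\sigma^2}$ yields $M\ge-\sigma^2 I$ and $\beta\le\sigma^2$, not $M\le\sigma^2I$ and $\beta\ge-\sigma^2$. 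This is harmless, because positivity of both factors already follows from $M\ge-\sigma I$, $\beta\le\sigma$ and $\sigma>1$.
\end{enumerate}
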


\begin{proof}
    We have the following information at the contact points $ (x,t)\in \T\setminus \mathcal{N}$ (here $\mathcal{N}$ is a measure zero set where $E_\sigma$ is not differentiable),
    \begin{equation}
        E_\sigma(x,t) = \sigma^2(t-s-\frac{|y-x|^2}{2}),
            \end{equation}
            \begin{equation}
                \nabla_x E_\sigma(x,t) = \sigma^2(y-x),
            \end{equation}
        \begin{equation}\label{tuu}
            \partial_t E_\sigma(x,t) \leq \sigma,
        \end{equation}
        \begin{equation}\label{dll}
            D^2E_\sigma(x,t) \geq -\sigma I_{n\times n}.
        \end{equation}
         The first two equations come from the fact that a concave paraboloid $P_{\sigma^2}$ touches $E_\sigma$ from below at $(x,t)$ and the last two come from the fact that some concave paraboloid $P_\sigma$ also touches $E_\sigma$ from below by definition. Thus the contact point to vertex mapping $L: \mathcal{T}\setminus \mathcal{N}\to V$ is given by,
        $$L: (x,t) \mapsto (y = x+\frac{\nabla_x E_\sigma(x,t)}{\sigma^2}, s = t-\frac{|\nabla_x E_\sigma(x,t)|^2}{2\sigma^4}-\frac{E_\sigma(x,t)}{\sigma^2}). $$
        We assume for now that $E_\sigma$ can be touched from above by convex paraboloids of opening $C$ on $\T\setminus\mathcal{N}$. Therefore the map $L$ is Lipschitz since $E_\sigma$ can be uniformly touched from above and below by paraboloids of fixed openings. 
        
        Moreover this map must be injective. We show this by contradiction. Consider a concave paraboloid $P_{\sigma^2}$ and assume it touches $E_\sigma$ at two points $(x_1,t_1),(x_2,t_2)$. Let $P_1,P_2$ be two concave paraboloids of opening $\sigma$ that touch $E_\sigma$ from below at these two points respectively. We assume $t_2\geq t_1$. Since the opening of $P_{\sigma^2}$ is strictly greater that $P_2$, if $P_{\sigma^2}$ touches $P_2$ at some point $(x_2,t_2)$, then $P_{\sigma^2} < P_2$ everywhere except $(x_2,t_2)$. However by definition of $E_\sigma$, $E_\sigma\geq P_2$. Since $t_1\leq t_2$, we now have, 
        $$E_\sigma(x_1,t_1)\geq P_2(x_1,t_1)> P_{\sigma^2}(x_1,t_1) = E_{\sigma}(x_1,t_1),$$
         a contradiction.
        \\

        We further restrict ourselves to $T\setminus \mathcal{N}'$, where $\mathcal{N}'$ a measure zero set is defined as,
         $$\mathcal{N}' = \mathcal{N}\cup \{\text{The map L is not differentiable}\}.$$
         Now to calculate the Jacobian $D_{(x,t)}((y,s))$, we do a calculation similar to \cite{wang_small_2012}. Its first $n$ rows can be written as,
          $$\left(I_{n\times n}+\frac{D^2_xE_\sigma}{\sigma^2},\frac{\nabla_x \partial_t E_\sigma}{\sigma^2}\right).$$
           Its last row can be written as, 
           $$\left(0,1-\frac{\partial_tE_\sigma}{\sigma^2}\right)-\left(I+\frac{D^2_xE_\sigma}{\sigma^2}, \frac{\nabla_x\partial_tE_\sigma}{\sigma^2}\right)\boldsymbol{\cdot} \frac{\nabla_xE_\sigma}{\sigma^2}.$$
            Thus to check positivity and also calculate the determinant, we can eliminate the second term above. By the Area formula, inequalities \eqref{tuu},\eqref{dll} and using the fact that $L$ is bijective,
        \begin{equation}\label{af}
        |V| \geq \int_{\T\setminus \mathcal{N}'}\left(1-\frac{\partial_t E_\sigma}{\sigma^2}\right)\det\left(I+\frac{D^2_xE_\sigma}{\sigma^2}\right)\geq (1-\frac{1}{\sigma})^{n+1}|\T|
        \end{equation}
       Now define,
        $$\T_N = \{(x,t)\in \T\setminus\mathcal{N}'|\; E_\sigma \text{ is touched from above by convex paraboloids of opening } N\}.$$
         From equation \eqref{af}, $|V| \geq (1-1/\sigma)^{n+1}|\T_N|$ for every $N$. Moreover, since $E_\sigma$ is uniformly semi-convex we have $\T\setminus\mathcal{N}' = \cup_{N\in\mathbb{N}}\T_N$. 

\end{proof}

The following lemma is our version of the ABP estimate which gives information (in measure) about the additional contact points we get in Lemma \ref{tching}.

\begin{lemma}\label{pgn}
    Let $u\in C(\overline{Q_1})$ be a uniformly semi-concave function such that $u_t-\mathcal{M}^-_{\Lambda,\lambda}(D^2u)\geq 0$. Let $T\subseteq Q_0\setminus A_{\sigma}(u)$ be points where shapes $P_{\sigma^2}$ touch the envelope $E_\sigma$ from below. Then letting these shapes flow and assuming they touch $u$ in $Q_1$,
    $$|A_{\sigma^2}(u)\setminus A_{\sigma}(u)|\geq \frac{\lambda^{n-1}\min\{1,\lambda\}\left(1-\frac{1}{\sigma}\right)^{n+1}}{(1+n\Lambda)^{n}}|T|.$$
\end{lemma}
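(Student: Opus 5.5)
The plan is a two-stage ABP argument: pass from the tangency set $T$ to the set of vertices $V$ via Lemma \ref{tf}, and then from $V$ to the new contact set $A_{\sigma^2}(u)\setminus A_\sigma(u)$ via a second area-formula computation for the map sending contact points of $u$ to vertices.

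First, applying Lemma \ref{tf} with $\T = T$ gives $|V|\geq (1-1/\sigma)^{n+1}|T|$. Here $V$ is the set of vertices of the paraboloids $P_{\sigma^2}$ touching $E_\sigma$ at points of $T$. Next I follow each such paraboloid forward in time; by hypothesis it eventually touches $u$ in $Q_1$. To make this precise, lower each $P^{(y,s)}_{\sigma^2}$ by translating in time, keeping the vertex $(y,s)$ fixed up to a vertical shift of the profile. Equivalently, for each vertex $(y,s)\in V$, consider the largest $t$-flow for which the paraboloid stays below $u$, and record the first touching point $(x',t')$. By Lemma \ref{tching}, $(x',t')\in A_{\sigma^2}(u)\setminus A_\sigma(u)$. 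So each $(y,s)\in V$ is the vertex of a paraboloid of opening $\sigma^2$ touching $u$ (hence $E_{\sigma^2}$) from below at a point of $S:=A_{\sigma^2}(u)\setminus A_\sigma(u)$. The map $\Phi:S\to V$, $(x',t')\mapsto(y,s)$, is then surjective onto $V$, and it has exactly the formula of the map $L$ in Lemma \ref{tf} with $E_\sigma$ replaced by $u$ and $\sigma^2$ as the opening:
$$y=x'+\frac{\nabla u}{\sigma^2},\qquad s=t'-\frac{|\nabla u|^2}{2\sigma^4}-\frac{u}{\sigma^2}.$$
Since $u$ is uniformly semi-concave and touched from below by paraboloids at points of $S$, $u$ is $C^{1,1}$-like there. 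Hence $\Phi$ is Lipschitz on $S$, and the area formula gives
$$|V|\leq\int_S |\det D\Phi| .$$

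The main step is bounding $|\det D\Phi|$ pointwise at a.e.\ $(x',t')\in S$. As in Lemma \ref{tf}, after row elimination,
$$\det D\Phi=\left(1-\frac{u_t}{\sigma^2}\right)\det\left(I+\frac{D^2u}{\sigma^2}\right).$$
At a contact point the touching paraboloid gives $D^2u\geq-\sigma^2 I$ and $u_t\leq\sigma^2$, so both factors are nonnegative. Write $M=I+D^2u/\sigma^2\geq0$ and $a=1-u_t/\sigma^2\geq 0$. The equation at points of twice differentiability (valid a.e.\ by Lemma \ref{wdiff} and semi-concavity) gives $u_t\geq\mathcal M^-(D^2u)$, i.e.
$$1-a\geq \lambda\sum_{\mu_i>1}(\mu_i-1)+\Lambda\sum_{\mu_i<1}(\mu_i-1),$$
where $\mu_i$ are the eigenvalues of $M$. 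Since $\mu_i-1\geq -1$, this yields
$$a+\lambda\sum(\mu_i-1)_+\leq 1+n\Lambda .$$
By AM–GM applied to $a$ and the $\lambda\mu_i$,
$$a\,\lambda^n\prod\mu_i\leq \left(\frac{a+\lambda\sum\mu_i}{n+1}\right)^{n+1}\lesssim(1+n\Lambda)^{n}\cdot(\text{const}),$$
using $\lambda\sum\mu_i\leq \lambda n+\lambda\sum(\mu_i-1)_+$. Tracking the case $\lambda<1$ versus $\lambda\geq1$ (weighting $a$ by $\min\{1,\lambda\}$ in the AM–GM) should give
$$|\det D\Phi|\leq \frac{(1+n\Lambda)^n}{\lambda^{n-1}\min\{1,\lambda\}}.$$
I expect this constant bookkeeping to be the main obstacle; the exact exponents in the stated bound dictate how to split the AM–GM weights.

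Combining the estimates,
$$(1-1/\sigma)^{n+1}|T|\leq|V|\leq \frac{(1+n\Lambda)^n}{\lambda^{n-1}\min\{1,\lambda\}}\,|S|,$$
which is the claim. A secondary technical point is measurability and the surjectivity of $\Phi$ onto $V$ up to a null set. I would handle it as at the end of Lemma \ref{tf}, exhausting $S$ by sets $S_N$ on which $u$ is touched from above with opening $N$ and taking the union.
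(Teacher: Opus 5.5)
\textbf{Gap in the Jacobian bound.} Your overall architecture is the paper's. You use Lemma~\ref{tf} to pass from $T$ to the vertex set $V$. You use Lemma~\ref{tching} to place the first contact points of the flowed paraboloids in $S=A_{\sigma^2}(u)\setminus A_\sigma(u)$. You then apply the area formula to the contact-to-vertex map, whose Jacobian is $a\det M$ with $a=1-u_t/\sigma^2$ and $M=I+D^2u/\sigma^2$.

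The gap is in the pointwise bound on $a\det M$, which is the step carrying the content of the lemma, and the route you sketch cannot succeed. You relax the equation to $a+\lambda\sum(\mu_i-1)_+\le 1+n\Lambda$. Under that constraint alone (with $a,\mu_i\ge 0$), the supremum of $a\prod\mu_i$ is attained at $\mu_1=\dots=\mu_n=\frac{1+n\Lambda+n\lambda}{(n+1)\lambda}$, $a=\lambda\mu_1$. Its value is $\lambda^{-n}\bigl(\frac{1+n\Lambda+n\lambda}{n+1}\bigr)^{n+1}$, which grows like $\Lambda^{n+1}$. For $n=1$, $\lambda=1$, $\Lambda=10$ it equals $36$, against the target $11$.

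Since this configuration satisfies your relaxed constraint, no choice of AM--GM weights can give the stated constant, nor can any other argument that uses only that constraint. Your displayed ``$\lesssim(1+n\Lambda)^n\cdot(\mathrm{const})$'' silently drops a power of $1+n\Lambda$. This is not cosmetic: exponent $n$ rather than $n+1$ is exactly the improvement claimed in Remark~\ref{improvement}.

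\textbf{What is lost and how to fix it.} Replacing $\mu_i-1$ by $-1$ throws away the coupling between $a$ and the small eigenvalues. If $a>1$ then $u_t<0$, which forces $\mathcal{M}^-(D^2u)<0$ and hence some $\mu_j<1$; that factor compensates for the growth of $a$. The paper captures this with a case split.

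\emph{Case 1: $D^2u$ has a negative eigenvalue.} Then the corresponding $\mu_j\le 1$. Also $a\le 1+n\Lambda$, because $u_t\ge\mathcal{M}^-(D^2u)\ge -n\Lambda\sigma^2$. If $\mu_{\max}>1$, there are at most $n-1$ negative eigenvalues, each with $\mu_i-1\ge -1$, so
$\lambda(\mu_{\max}-1)\le u_t/\sigma^2+(n-1)\Lambda\le 1+(n-1)\Lambda$.
Hence every $\mu_i\le 1+\frac{1+(n-1)\Lambda}{\lambda}\le\frac{1+n\Lambda}{\lambda}$, using $\lambda\le\Lambda$. This gives $a\det M\le(1+n\Lambda)^n/\lambda^{n-1}$.

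\emph{Case 2: all eigenvalues of $D^2u$ are nonnegative.} Then $u_t\ge 0$, so $a\le 1$ and $a\det M\le\bigl(\frac{1+n\Lambda}{\lambda}\bigr)^n$.

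The larger of the two bounds is $(1+n\Lambda)^n/(\lambda^{n-1}\min\{1,\lambda\})$. With this case split in place of your AM--GM step, the rest of your argument goes through as in the paper.
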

\begin{proof}
    From Lemma \ref{tf} the vertices $V$ have measure, $|V|\geq (1-1/\sigma)^{n+1} |T|$. From Lemma \ref{tching} we know the paraboloids $P_{\sigma^2}$ will touch at $A_{\sigma^2}(u)\setminus A_\sigma(u)$. Let $(x,t)\in (A_{\sigma^2}(u)\setminus A_\sigma(u))\setminus \mathcal{N}$ ($\mathcal{N}$ is the set of non differentiability of u) be a new point of contact. Then we have the following information at $(x,t)$ for some vertex $(y,s)\in V$,
   \begin{equation}
        u(x,t) = \sigma^2(t-s-\frac{|y-x|^2}{2}),
            \end{equation}
            \begin{equation}
                \nabla_x u = \sigma^2(y-x),
            \end{equation}
        \begin{equation}\label{tu}
            \partial_t u \leq \sigma^2,
        \end{equation}
        \begin{equation}\label{dl}
            D^2_xu \geq -\sigma^2 I_{n\times n}.
        \end{equation}
        From inequality \eqref{tu}, $\mathcal{M}^{-}_{\Lambda,\lambda}(D^2u) \leq \sigma^2$. By inequality \eqref{dl} we also know the negative eigenvalues of $D^2_xu$ have to be atleast $-\sigma^2$. Therefore, 
        \begin{equation}
        D^2_xu \leq \sigma^2\frac{(\Lambda(n-1)+1)}{\lambda} I_{n\times n}.
        \end{equation}
        Moreover due to inequality \eqref{dl} and the equation, $u_t \geq -n\Lambda\sigma^2$. The contact point to vertex map is now given by,
        $$(x,t) \mapsto (y = x+\frac{\nabla_x u}{\sigma^2}, s = t-\frac{|\nabla_x u|^2}{2\sigma^4}-\frac{u}{\sigma^2}).$$
        With an argument similar to Lemma \ref{tf}, this map is Lipschitz and we can apply the area formula to get,

        \begin{equation}
            |V| \leq \int_{(A_{\sigma^2(u)}\setminus A_\sigma(u))\setminus\mathcal{N}}\left(1-\frac{\partial_t u}{\sigma^2}\right)\det\left(I+\frac{D^2_xu}{\sigma^2}\right).
        \end{equation} 
        Now we can breakup the estimate into two cases,
        \begin{itemize}
            \item Case 1: Atleast one eigenvalue of $D^2_xu$ is negative. Thus the bound becomes,
             $$\left(1-\frac{\partial_t u}{\sigma^2}\right)\det\left(I+\frac{D^2_xu}{\sigma^2}\right) \leq (1+n\Lambda)\left(1+\frac{\Lambda(n-1)+1}{\lambda}\right)^{n-1}. $$
             \item Case 2: All eigenvalues of $D^2_xu$ are positive. Since $u_t-\mathcal{M}^-_{\Lambda,\lambda}\geq 0$, we get $u_t\geq 0$ and hence,
             $$\left(1-\frac{\partial_t u}{\sigma^2}\right)\det\left(I+\frac{D^2_xu}{\sigma^2}\right) \leq \left(1+\frac{\Lambda(n-1)+1}{\lambda}\right)^n.$$  
        \end{itemize}
        Thus in both cases since $\lambda\leq \Lambda$ we get our desired bound.
    \end{proof}

\begin{rmk}\label{improvement}
    We are able reduce the exponent of $(n\Lambda+1)^{-1}$ by $1$ when compared to a similar ABP estimate in \cite{wang_small_2012} and \cite{JPD}. By a carefully designed iteration argument in our proof in Section \ref{proof} which decouples the ABP estimate from the localization part of the proof, we further reduce the exponent of $(\lambda(n\Lambda+1))^{-1}$ by another factor.
\end{rmk}
Now we consider an important barrier,
\begin{equation}
\phi_{(x_0,t_0)}(x,t) = (t-t_0)^{-\alpha}(e^{-\beta|x-x_0|^2/(t-t_0)}-e^{-\beta}) \text{ for } t>t_0.
\end{equation}
For constants $\alpha$ and $\beta$ given by
\begin{equation}\label{ab}
\beta = 4\left(1+\frac{(n-1)\Lambda+1}{\lambda}\right) \text{ and } \alpha = 2n\beta \Lambda+1,
\end{equation}
the above function is a subsolution, $\phi_t-\mathcal{M}^{+}_{\lambda,\Lambda}(D^2\phi) \leq 0$, on $\mathcal{F}_{(x_0,t_0)}$. The following lemma is a quantitative version of the strong maximum principle,

\begin{lemma}\label{smp}
    Let $u\in C(\overline{Q_2})$ be a positive function that such that $u_t-\mathcal{M}^-_{\Lambda,\lambda}(D^2u) \geq 0$. For a spatial neighbourhood $L = \{|x-x_0|\leq \epsilon\}\times\{t_0\}\subseteq \B_1$,
    $$\inf_{x\in L}\{u(x)\} \leq e^{\beta}(1+\frac{1-t_0}{\e^2})^{\alpha+1}u(0,1).$$
\end{lemma}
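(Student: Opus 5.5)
The plan is a comparison argument with the barrier $\phi$ defined just above. Put a rescaled copy of $\phi$ below $u$ at time $t_0$, with its support inside $L$, and let it evolve forward to the point $(0,1)$. Set $m=\inf_{x\in L}u(x,t_0)\ge 0$; if $m=0$ there is nothing to prove. Place the vertex of the barrier at $(x_0,t_*)$ with $t_*=t_0-\e^2$. At time $t=t_0$ the positivity set of $\phi_{(x_0,t_*)}$ is $\{|x-x_0|^2<t_0-t_*\}=\{|x-x_0|<\e\}$, which lies inside $L$. Define
$$w(x,t)=m\,\e^{2\alpha}\,\phi_{(x_0,t_*)}(x,t),\qquad t\ge t_0 .$$
Then $w(\cdot,t_0)\le m\,\e^{2\alpha}\e^{-2\alpha}(1-e^{-\beta})\le m\le u(\cdot,t_0)$ on $L$. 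Moreover $w\le 0\le u$ off $L$ at time $t_0$, and $w=0\le u$ on the lateral boundary $\{t-t_*=|x-x_0|^2\}$ of $\F_{(x_0,t_*)}$.

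Next I invoke the subsolution property recorded before the lemma: with $\alpha,\beta$ as in \eqref{ab}, $\phi_t-\mathcal{M}^+_{\lambda,\Lambda}(D^2\phi)\le 0$ on $\F_{(x_0,t_*)}$. The same holds for the positive multiple $w$. The comparison principle for viscosity super/subsolutions then applies on the region $\F_{(x_0,t_*)}\cap\{t_0\le t\le 1\}$, which must sit inside the domain $Q_2$ of $u$. It gives $u\ge w$ there, using the parabolic boundary data from the previous paragraph. (Since $\phi$ is smooth on the open forward ball, one can argue directly by touching: a first contact of $w-\delta$ from below would contradict the supersolution inequality.)

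Finally I evaluate at $(0,1)$. Write $1-t_*=\e^2(1+(1-t_0)/\e^2)$ and $s=|x_0|^2/(1-t_*)$. Then
$$u(0,1)\ge w(0,1)=m\Big(1+\tfrac{1-t_0}{\e^2}\Big)^{-\alpha}\big(e^{-\beta s}-e^{-\beta}\big).$$
For the last factor, $e^{-\beta s}-e^{-\beta}=e^{-\beta}(e^{\beta(1-s)}-1)\ge e^{-\beta}\beta(1-s)\ge e^{-\beta}(1-s)$, since $\beta\ge 1$. If $|x_0|^2\le 1-t_0$, which is what $L\subseteq\B_1$ should encode in the paper's time orientation, then $1-s=(1-t_*-|x_0|^2)/(1-t_*)\ge \e^2/(1-t_*)=(1+(1-t_0)/\e^2)^{-1}$. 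Rearranging yields exactly $m\le e^{\beta}(1+\frac{1-t_0}{\e^2})^{\alpha+1}u(0,1)$.

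I expect the main obstacle to be geometric bookkeeping rather than analysis. One has to check that $(0,1)$ lies in $\F_{(x_0,t_*)}$ with the quantitative margin $1-t_*-|x_0|^2\ge\e^2$, and that the comparison region stays inside the domain of $u$. Both depend on reading $L\subseteq\B_1$ and the point $(0,1)$ consistently with the paper's time convention, and I would fix that convention first, possibly after a time reflection or translation.
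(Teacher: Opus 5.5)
Your proposal is correct and is essentially the paper's proof: the same barrier $\phi$ with vertex $(x_0,t_0-\e^2)$ (positive at time $t_0$ exactly on $|x-x_0|<\e$), comparison forward in time to $(0,1)$, and the same elementary lower bound on $e^{-\beta s}-e^{-\beta}$. The differences are minor. You argue directly, normalizing by $m=\inf_L u$, whereas the paper argues contrapositively with $2\phi_1 u(0,1)/\phi_1(0,1)$. Your comparison region, the whole forward ball truncated at $t\le 1$, is also slightly cleaner than the paper's $Q^F_{\sqrt{1-t_0}}((x_0,t_0))\cap\F_{(x_1,t_1)}$: that cylinder's side meets the positivity set of $\phi_1$ for $t>1-\e^2$, where no boundary information on $u$ is available.
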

\begin{proof}
    Consider the vertex $(x_1,t_1) = (x_0,t_0-\e^2)$ and let $\phi_1 = \phi_{(x_1,t_1)}$. On the open set $Q_{\sqrt{1-t_0}}^F((x_0,t_0))\cap \mathcal{F}_{(x_1,t_1)}$, compare $u$ with the function $\varphi = 2\phi_1u(0,1)/\phi_1(0,1)$. If at time $t=t_0$ $\varphi\leq u$, then by the comparison principle, $\varphi\leq u$ at all future times. However $u(0,1) < \varphi(0,1)$, therefore at time $t_0$ there exists $\tilde{x}$ such that $u(\tilde{x},t_0)\leq \varphi(\tilde{x},t_0)$. Note that this point has to lie in $L$, i.e., $(\tilde{x},t_0)\in L$, since at $t=t_0$ $\phi_1\geq 0$ only on $L$. By a computation we get,
    $$u(x,t_0) \leq 2u(0,1)\frac{\epsilon^{-2\alpha}(e^{\frac{-\beta|x-x_0^2|}{\e^2}}-e^{-\beta})}{(1+\e^2-t_0)^{-\alpha}(e^{\frac{-\beta|x_0|^2}{(1+\e^2-t_0)}}-e^{-\beta})}\leq (1+\frac{1-t_0}{\e^2})^{\alpha+1}\frac{(1-e^{-\beta})}{e^{-\beta}\beta}u(0,1).$$
    Since $L\subseteq \mathcal{B}_1$, $(|x_0|+\e)^2 \leq -(t_0-1)$, thus $|x_0|^2\leq 1-t_0-\e^2$. Therefore to bound the denominator (let $\frac{1-t_0}{\e^2} = c$),
    $$e^{\frac{-\beta|x_0|^2}{(1+\e^2-t_0)}}-e^{-\beta} \geq e^{-\beta\frac{c-1}{c+1}}-e^{-\beta} = e^{-\beta}(e^{\frac{2\beta}{c+1}}-1)\geq e^{-\beta}\frac{2\beta}{c+1},$$
    
\end{proof}

\section{Main Proof}\label{proof}
We now prove Theorem \ref{he} using the strategy outlined in Section \ref{intro}.

\begin{proof}[\textbf{Proof of Theorem \ref{he}}]
For $M = \alpha+1$ ($\alpha$ as in \eqref{ab}) and $\e_k = 2^{-1}4^{-\frac{k+1}{M+2}}$, consider the sets
$$\mathcal{B}_{\e_k} = \mathcal{B}_{(0,-\e_k)}\cap Q_{1-\e_k}.$$
 Also let $\sigma = 8$ and $ C_0 = e^\beta$ ($\beta$ as in \eqref{ab}). To simplify notation we also denote $c_\sigma$ to be the constant in Lemma \ref{tf} and $c_\sigma'$ to be the constant in Lemma \ref{pgn}. We normalize $u$ by dividing it by $64\cdot16^MC_0u(0,1)$.
    For $\e>0$ satisfying $\sigma^{-\e}\geq \max\{1-\frac{c_\sigma'}{2},4^{-\frac{1}{M+2}}\}$, we show the following decay in measure,
    \begin{equation}\label{main_decay}
    |\mathcal{B}_1 \setminus A_{\sigma^k}| \leq  \sigma^{-\e k}|\B_1| . 
    \end{equation}
    \textit{Boundedness of $u$ on $A_\sigma$}: Since $u(0,1) = (64\cdot16^MC_0)^{-1}$, for points $p\in A_{\sigma^k}\cap \mathcal{B}_{\e_k}$ we claim that $u(p) \leq \sigma^k$. This observation along with \eqref{main_decay} proves the weak Harnack inequality.
    We proceed by contradiction. If $u(p)> \sigma^{k}$, since a paraboloid of opening $\sigma^{k}$ touches $u$ from below at $p$, then on the set $L_1 = \{|x-a|\leq \epsilon_k/2\}\times\{b\}\subseteq \B_1$ for some $(a,b)\in \B_1$, 
    $$\inf_{(x,t)\in L_1}\{u(x,t)\}\geq \frac{\sigma^k}{2}.$$
     However by Lemma \ref{smp} the infimum can at most be $64^{-1}8^{-M}\e_k^{-2M}$, a contradiction.\\

     The decay \eqref{main_decay} is now proved by induction. To ensure the new contact points gained after applying Lemma \ref{pgn} lie in $\B_1$, we only use the information available in $\B_{\e_k}$. To that end we show, 
     \begin{equation}\label{il}
        |(A_{\sigma^{k+1}}\setminus A_{\sigma^k})\cap \B_{\e_{k+1}}|\geq c_\sigma' |\mathcal{B}_{\e_k}\setminus A_{\sigma^k}|.
     \end{equation}
    \textit{Vertex of new paraboloids $P_{\sigma^{k+1}}$}: For any point $p\in \B_{\e_k}$, $Q_p^F(\delta_{k+1}) \subseteq \B_{\e_{k+1}}$. Here $\delta_{k+1} = \e_{k+1}/4$. Let $P_0 = P_{\sigma^{k+1}}$ be a paraboloid that touches $E_{\sigma^k}$ from below at $p_0 = (x_0,t_0)\in \B_{e_k}$. If its vertex was $(y_0,s_0)$, then $y_0 \in  B_{\delta_{k+1}/4}(x_0)$. If not, then in a spatial neighbourhood
      $$L_2 = \{|x-\tilde{x}_0|\leq \frac{\delta_{k+1}}{8}\}\times\{t_0\},$$ 
      around the point $\tilde{x}_0=x_0+\frac{\delta_{k+1} (y_0-x_0)}{4|y_0-x_0|}$ the paraboloid $P_0$ is greater than $ 3\sigma^{k+1}\delta_{k+1}^2/64$. However by our choice of $\sigma$, 
     $$\inf_{(x,t_0)\in L_2}\{u(x,t_0)\} \geq \inf_{(x,t_0)\in L_0}\{P_0\} \geq \frac{3\sigma^{k+1}\delta_{k+1}^2}{64} \geq 64^{-1}2^{-M}\delta_{k+1}^{-M},$$ 
     a contradiction to Lemma \ref{smp}.\\
      Moreover around $y_0$, on a spatial neighbourhood $L_0 = \{|x-y_0| \leq \delta_{k+1}/8\}\times \{t_0\}$,  
      $$\inf_{(x,t_0)\in L_0}\{u(x,t_0)\} \geq \inf_{(x,t_0)\in L_0}\{P_0\} \geq P_0(y_0,t_0) - \sigma^{k+1}\frac{\delta_{k+1}^2}{64}.$$ 
      Therefore we can similarly argue $t_0-s_0 \leq \delta_{k+1}^2/16$. Since otherwise 
      $$P_0(y_0,t_0) \geq P_0(y_0,s_0)+\sigma^{k+1}(t_0-s_0) \geq \sigma^{k+1}\frac{\delta_{k+1}}{16}.$$ 
      Hence $(y_0,s_0)\in Q_{p_0}^B(\delta_{k+1}/4)$.\\
     \textit{Contact at interior points}: Now for any point $p\in Q_{p_0}^B(\delta_{k+1}/4)$, $Q_{p}^F(\delta_{k+1}/2) \subseteq \mathcal{B}_{\e_{k+1}}$. Then starting at $p = (p_x,p_t)\in Q_{p_0}^B(\delta_{k+1}/4)$,  if we let a paraboloid $P_{\sigma^{k+1}}$ tangent from below to $E_{\sigma^{k}}$ flow, it must touch $u$ in $Q_p^F(\delta_{k+1}/2)$. It follows by contradiction as before. Let $(y,s)$ be the vertex of $P_{\sigma^{k+1}}$. Then on the set $L = \{|x-y| \leq \delta_{k+1}/2\}\times \{p_t+\delta_{k+1}^2/4\}$, 
     $$u \geq P_0\geq 3\sigma^{k+1}\frac{\delta_{k+1}}{16},$$ 
     which contradicts Lemma \ref{smp}. Moreover the new contact point will be in 
     $$\{P_0\geq 0\}\subseteq Q_p^F(\frac{\delta_{k+1}}{2})\subseteq \B_{\e_{k+1}}.$$
      Inequality \eqref{il} follows by applying Lemma \ref{pgn} to $u/\sigma^{k-1}$.
    \\ 
    
    \textit{Induction argument}: We now proceed by induction. The base case $k=0$ is trivial. For the induction step there are two cases to consider,
    
     \begin{itemize}
        \item If $|\B_{\e_k}\setminus A_{\sigma^k}|\leq \frac{|\B_{1}\setminus A_{\sigma^k}|}{2}$, then, 
        $$|\B_{1}\setminus A_{\sigma^{k+1}}| \leq |\B_{1}\setminus A_{\sigma^k}| \leq 2 |\B_1\setminus \B_{\e_k}|\leq 2\e_k|\B_1| \leq 4^{-\frac{k+1}{M+2}} |\B_1|.$$
        
        \item  If $|\B_{\e_k}\setminus A_{\sigma^k}| \geq \frac{|\B_{1}\setminus A_{\sigma^k}|}{2}$, from inequality \eqref{il}, 
        $$|(A_{\sigma^{k+1}}\setminus A_{\sigma^{k}})\cap \B_{1}|\geq \frac{c_\sigma'}{2}|\B_{1}\setminus A_{\sigma^k}|.$$
        Thus,
        $$|\B_{1}\cap A_{\sigma^{k+1}}| \geq |\B_{1}|- |\B_{1}\setminus A_{\sigma^k}|+|A_{\sigma^{k+1}}\cap (\B_{1}\setminus A_{\sigma^k})| \geq |\B_{1}|- (1-\frac{c_\sigma'}{2})|\B_{1}\setminus A_{\sigma^k}|$$
        $$\implies |\B_{1}\setminus A_{\sigma^{k+1}}| \leq \left(1-\frac{c_\sigma'}{2}\right)|\B_{1}\setminus A_{\sigma^k}|.$$
        
     \end{itemize}
     Therefore \eqref{main_decay} follows.\\

      \textit{Asymptotic analysis of $\e$}: To ensure $1-c_{\sigma}'/2 \leq \sigma^{-\e}$, we must have $\e \leq c_\sigma'/2\ln(\sigma)$ since $\ln(1-c_\sigma'/2) \leq -c_\sigma'/2$. Therefore, 
      $$\e \leq \left(2^{n+2}\ln(4)\left(\frac{1+n\Lambda}{\lambda}\right)^n\right)^{-1}.$$
       Furthermore we also need to ensure $\sigma^\e \leq 4^{\frac{1}{M+2}}$. Thus 
       $$\e \leq (M+2)^{-1}.$$
     Hence $\e =\bigO( \lambda^{n}(n\Lambda+1)^{-n})$. 

\end{proof}
\begin{rmk}\label{tr}
As a corollary of the preceding proof, we also obtain Theorem \ref{tht}. Since, by definition, $u$ is touched from below by paraboloids of opening $\sigma$ on the sets $A_\sigma$. 
\end{rmk}

\section{Acknowledgment}
The author would like to thank Professor Connor Mooney for asking a question that motivated this research. The author was also supported by Professor C. Mooney's NSF grant DMS-2143668.

\end{document}